\title{Almost sharp nonlinear scattering in one-dimensional Born-Infeld equations arising in nonlinear Electrodynamics}
\author{Miguel A. Alejo}
\thanks{M. A. was partially funded by Product. CNPq grant no. 305205/2016-1, Universal 16 
CNPq grant no. 431231/2016-8 and MathAmSud/Capes EEQUADD collaboration Math16-01.}
\author{Claudio Mu\~noz}
\thanks{C.M. is partially supported by Fondecyt no. 1150202, Millennium Nucleus Center for Analysis of PDE NC130017, Fondo Basal CMM, and MathAmSud EEQUADD collaboration Math16-01.}
\address{Departamento de Matem\'atica, Universidade Federal de Santa Catarina, Brasil}
\email{miguel.alejo@ufsc.br}
\address{Departamento de Ingenier\'ia Matem\'atica and CMM UMI 2807-CNRS \\ Universidad de Chile, Santiago, Chile}
\email{cmunoz@dim.uchile.cl}
\date{August, 2017}
\subjclass[2000]{Primary 37K15, 35Q53; Secondary 35Q51, 37K10}
\keywords{Born-Infeld equation, scattering, decay estimates, Virial}
\chardef\bslash=`\\ % p. 424, TeXbook
\newtheorem{thm}{Theorem}[section]
\newtheorem{lem}[thm]{Lemma}
\theoremstyle{remark}
\newtheorem{rem}{Remark}[section]
\numberwithin{equation}{section}
\newcommand{\R}{\mathbb{R}}
\newcommand{\la}{\lambda}
\newcommand{\sech}{\operatorname{sech}}
\newcommand{\be}{\begin{equation}}
\newcommand{\ee}{\end{equation}}
\newcommand{\bp}{\begin{proof}}
\newcommand{\ep}{\end{proof}}
\newcommand{\bel}{\begin{equation}\label}
\newcommand{\eeq}{\end{equation}}
\newcommand{\bea}{\begin{eqnarray}}
\newcommand{\eea}{\end{eqnarray}}
\newcommand{\bee}{\begin{eqnarray*}}
\newcommand{\eee}{\end{eqnarray*}}
\newcommand{\ben}{\begin{enumerate}}
\newcommand{\een}{\end{enumerate}}
 \providecommand{\abs}[1]{\lvert#1 \rvert}
\newcommand{\ve}{\varepsilon}
\newcommand{\eval}[2][\right]{\relax
  \ifx#1\right\relax \left.\fi#2#1\rvert}
\let\abs=\envert
\begin{document}
\begin{abstract}
We study decay of small solutions of the Born-Infeld equation in 1+1 dimensions, 
a quasilinear scalar field equation modeling nonlinear electromagnetism, as well as 
branes in String theory and minimal surfaces in Minkowski space-times. From the work of 
Whitham, it is well-known that there is no decay because of arbitrary solutions traveling 
to the speed of light just as linear wave equation.  However, even if there is no global decay 
in 1+1 dimensions, we are able to show that all globally small $H^{s+1}\times H^s$, $s>\frac12$ 
solutions do decay to the zero background state in space, inside a strictly proper subset of the 
light cone.  We prove this result by constructing a Virial identity related to a momentum law, in the
spirit of works \cite{KMM,KMM1}, as well as a Lyapunov functional that controls the $\dot H^1 \times L^2$ energy.  
\end{abstract}
\maketitle \markboth{Born-Infeld decay estimates}{Miguel A. Alejo and Claudio Mu\~noz}
\renewcommand{\sectionmark}[1]{}

%%%%%%%%%%%%%%%%%%%%%%%%%%%%%%%%%%%%%%%%%%%%%%%%%%%%%%%%%%%%%%%%%%%%%%%%%%%%%%%%%%%%%%%%%%%%%%%%%%%%%%%%%%%%%%%%%%%%%%%%%%%%%%%%%%%%%%%%%%%%%%%%%%%%%%%%%%%%%%%%%%%%%%%%%%%%%%%%%%%%%%%%%%%%%%%%%%%%%%%%%%%%%%%%%%%%%%%%%%%%%%%%%%%%%%%%%%%%%%%%%%%%%%%%%%%%%%%%%%%%%%%%%%%%%%%%%%%%%%%%%%%%%%%%%%%%%%%%%%%%%%%
\section{Introduction and main results}

\medskip

\subsection{The model} This note is concerned with the Born-Infeld \cite{BI} equation (BI) in $\R^{1+1}$:
\be\label{BI} 
\begin{aligned}
 (1- (\partial_t u)^2) \partial_x^2 u +  2\partial_x u\, \partial_t u \, &\partial_{tx}^2 u - (1+(\partial_xu )^2) \partial_t^2 u = 0,\\
 (t,x) \in & \R^{1+1}.
\end{aligned}
\ee

Here and along this paper $u=u(t,x) \in \R$ is a scalar field. BI equation was originally derived by Born and Infeld in 1934 to describe nonlinear 
electrodynamics, a generalization of the standard (linear) Maxwell equations, see \cite{Cher} for a detailed introduction to the field.  
A particular form of an  electromagnetic plane wave solution to the original, three dimensional BI equations, leads to the quasilinear equation \eqref{BI}.

\medskip

Although strictly nonlinear, equation \eqref{BI} shares many similitudes with the classical linear wave equation. 
Given any $C^2$ real-valued profile $\phi=\phi(s)$, both 
\be\label{upm}
u_\pm (t,x) := \phi(x \, \pm \, t)
\ee
are solutions to \eqref{BI} \cite{W}, just as the D'{}Alembert solution is to linear wave. This simple fact reveals that no standard decay estimates are expected in $1+1$ dimensions. Additionally, a form of ``two-soliton'' solution was found by E. Schr\"odinger \cite{Sch}. A Lax-pair representation 
for \eqref{BI}, just as in the case of Euler equations, was found in \cite{BD}. 

\medskip

From a physical point of view, the Born-Infeld equation \eqref{BI} can be derived as \emph{the Euler-Lagrange} 
equation associated to the Lagrangian density \cite{BD}
\be\label{Lag}
\mathcal L[\partial^\mu u] := (1+\partial^\mu u \partial_\mu u)^{1/2} =  (1+ (\partial_x u)^2-(\partial_t u)^2)^{1/2} ,
\ee
whose critical points also represents standard ``minimal surfaces'' in Minkowski geometry. More precisely, 
a simple computation reveals that \eqref{BI} can be written as the minimal surface equation
\be\label{MS}
\partial_t \Bigg(  \frac{ \partial_tu }{(1+ (\partial_x u)^2-(\partial_t u)^2)^{1/2}}\Bigg) -\partial_x \Bigg(  \frac{ \partial_x u }{(1+ (\partial_x u)^2-(\partial_t u)^2)^{1/2}}\Bigg) =0. 
\ee
This equation can be also recast as a mass conservation dynamics, which formally implies that the quantity 
\[
\int  \frac{ \partial_tu }{(1+ (\partial_x u)^2-(\partial_t u)^2)^{1/2}},
\]
is conserved along the dynamics. From this particular point of view, Lindblad \cite{Lindblad} showed global existence 
for small data and decay estimates of order $t^{-(n-1)/2}$ for the equivalent version of \eqref{MS} in $\R^{1+n}$ Minkowski space-time, 
provided $n\geq 1$. Similar results were also proved by Chae and Huh \cite{Chae} in more generality. Both results are strongly 
influenced by vector field techniques \cite{Kl1}, as well as the treatment of key null forms (such as $Q(u,v):= \partial^\mu u \partial_\mu v$), 
present in \eqref{MS}. In the case $n=1$, Lindblad \cite{Lindblad} constructs global small solutions by assuming sufficiently smooth, compactly
supported data. However, as far as we understand, no decay estimate is given in this particular setting ($n=1$), which remains the only case
where no result of this type was available. 

\medskip

Additionally, the study of \eqref{BI} has gained a large impulse in String theory in the last years because gauge fields
on a $D$-brane (that arise from attached open strings) are described by the same type of Lagrangian as \eqref{Lag}; 
see \cite{KZZ} and references therein for more details. The literature on the above subjects is huge; a fairly incomplete set of references on the ``critical'' submanifold problem from different points of view is given by \cite{Brendle,Bre,NS, KL,Don}.  

%\medskip
%
%It is well-known that \eqref{BI} has local $\dot H^1 \times L^2$ solutions, moreover, sufficiently small data gives rise to global solutions. Additionally, the energy
%\be\label{E}
%E[u,\partial_tu](t)   :=   \int \Bigg( \frac{1+(\partial_x u)^2}{(1+ (\partial_x u)^2-(\partial_t u)^2)^{1/2}} -1\Bigg),
%\ee
%is conserved (see Appendix \ref{A} for a proof of this fact).

\subsection{Main result} In this paper we consider the remaining case $n=1$, where no decay is present because the existence of arbitrary 
solutions moving to the speed of light. However, we will show that all sufficiently smooth and small solutions to \eqref{BI} must 
decay to zero inside a slightly proper subset of the light cone.
 
\begin{thm}\label{TH1}
Let $C>0$ be an arbitrary constant. Assume that for $\ve>0$ sufficiently small the solution $(u,\partial_tu )(t)$ of \eqref{BI} satisfies 
\be\label{Stab}
\sup_{t\in\R}\|(u,\partial_t u)(t)\|_{(H^{s+1}\times H^s)(\R)} <\ve, \quad s>\frac12.
\ee
Then, given the time-depending interval $I(t):= \Big( \frac{-C |t|}{\log^2 |t|} ,\frac{C |t|}{\log^2 |t|} \Big)$, $|t|\geq 2$, one has
\be\label{AS}
\lim_{t\to \pm\infty}\|(u,\partial_tu )(t) \|_{(\dot H^1\times L^2)(I(t))} =0.
\ee
\end{thm}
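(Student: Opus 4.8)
The plan is to rewrite \eqref{BI} as two conservation laws and run a localized virial/momentum argument at the scale $\lambda(t):=|t|/\log^2|t|$, $|t|\ge 2$. Set $v:=\partial_t u$, $w:=\partial_x u$ and $Q:=(1+w^2-v^2)^{1/2}$; then \eqref{MS} is the field equation $\partial_t(v/Q)=\partial_x(w/Q)$, and the stress-energy conservation for spatial translations gives the exact momentum law $\partial_t(vw/Q)=\partial_x\big((v^2-1)/Q\big)$, equivalently $\partial_t(vw/Q)=\partial_x\big((v^2-1+Q)/Q\big)$ after adding the $x$-independent constant $1$ (this renormalization removes the vacuum and makes the flux vanish at spatial infinity). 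I would first record that, since $s>\tfrac12$ yields $\|(v,w)(t)\|_{L^\infty}\lesssim\ve$ by Sobolev embedding, a Taylor expansion of $Q$ shows both the renormalized energy density $\tfrac{1+w^2}{Q}-1$ and the renormalized flux $\tfrac{v^2-1+Q}{Q}$ equal $\tfrac12(v^2+w^2)+O((v^2+w^2)^2)$, hence are $\ge c\,(v^2+w^2)$ uniformly in $t$; in particular the conserved energy controls $\|(u,\partial_t u)(t)\|_{\dot H^1\times L^2}^2\lesssim\ve^2$ for all time by \eqref{Stab}.

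The core object is the bounded functional
\[
\mathcal I(t):=\int \tanh\!\Big(\frac{x}{\lambda(t)}\Big)\,\frac{vw}{Q}\,dx, \qquad |\mathcal I(t)|\le \tfrac12\!\int (v^2+w^2)\lesssim \ve^2 .
\]
Differentiating in time, using the momentum law, and integrating by parts (the boundary terms vanish because the renormalized flux decays at infinity) gives
\[
\frac{d}{dt}\mathcal I(t)=-\frac1{\lambda}\int \operatorname{sech}^2\!\Big(\frac{x}{\lambda}\Big)\frac{v^2-1+Q}{Q}\,dx-\frac{\dot\lambda}{\lambda}\int \frac{x}{\lambda}\operatorname{sech}^2\!\Big(\frac{x}{\lambda}\Big)\frac{vw}{Q}\,dx .
\]
By the coercivity above the first term is $\le -\tfrac{c}{\lambda}\int \operatorname{sech}^2(x/\lambda)(v^2+w^2)$, while the deformation term is bounded by $\tfrac{|\dot\lambda|}{\lambda}\int \operatorname{sech}^2(x/\lambda)(v^2+w^2)$ since $y\operatorname{sech}^2 y$ is bounded and $|vw/Q|\le\tfrac12(v^2+w^2)$. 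Here the scale is decisive: $\lambda=|t|/\log^2|t|$ makes $\dot\lambda\to0$, so for $|t|$ large the deformation term is a small fraction of the main term and gets absorbed, leaving $\frac{d}{dt}\mathcal I\le -\tfrac{c}{2\lambda}\int \operatorname{sech}^2(x/\lambda)(v^2+w^2)$. The pure light cone $\lambda=|t|$ gives $\dot\lambda\equiv1$ and the error cannot be absorbed; any correction forcing $\dot\lambda\to0$, the mildest being logarithmic, fixes this, and this is the source of the almost-sharp $\log^2$ loss.

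Integrating in time and using boundedness of $\mathcal I$ yields the space-time estimate
\[
\int_{2}^{\infty}\frac1{\lambda(t)}\int \operatorname{sech}^2\!\Big(\frac{x}{\lambda(t)}\Big)(v^2+w^2)\,dx\,dt\lesssim \ve^2 .
\]
Since $\operatorname{sech}^2(x/\lambda)\ge \operatorname{sech}^2(C)>0$ on $I(t)=(-C\lambda,C\lambda)$, writing $g(t):=\|(u,\partial_t u)(t)\|_{\dot H^1\times L^2(I(t))}^2$ this controls $\int_2^\infty \lambda(t)^{-1}g(t)\,dt<\infty$; as $\int_2^\infty \lambda(t)^{-1}dt=+\infty$ (the weight $\log^2 t/t$ is not integrable), necessarily $\liminf_{t\to\infty}g(t)=0$.

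To upgrade this to the genuine limit \eqref{AS} I would use the announced Lyapunov functional, the localized renormalized energy $\mathcal E_\lambda(t):=\int \chi\!\big(x/\lambda(t)\big)\big(\tfrac{1+w^2}{Q}-1\big)dx$, with $\chi\ge0$ even, $\chi\equiv1$ on $[-C,C]$ and $\operatorname{supp}\chi$ compact. Using the energy law $\partial_t\big(\tfrac{1+w^2}{Q}-1\big)=\partial_x(vw/Q)$ one finds $\big|\tfrac{d}{dt}\mathcal E_\lambda(t)\big|\lesssim \tfrac1{\lambda}\int \operatorname{sech}^2(x/\lambda)(v^2+w^2)$, which is integrable in $t$ by the previous display; hence $\mathcal E_\lambda(t)$ converges as $t\to\infty$. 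Because $\mathcal E_\lambda\lesssim \int \operatorname{sech}^2(x/\lambda)(v^2+w^2)$ on its support, the same space-time bound forces $\liminf \mathcal E_\lambda=0$, so the limit is $0$, and since $\mathcal E_\lambda(t)\ge c\,g(t)$ we conclude $g(t)\to0$; the case $t\to-\infty$ is identical by the time-reversal symmetry of \eqref{BI}. The main obstacle is the borderline absorption of the deformation term: making the coercive virial survive forces the sublinear scaling and pins down the exact $\log^2$ rate, and carrying the whole computation rigorously requires the persistence of $H^{s+1}\times H^s$ regularity (to justify the differentiations under the integral and the vanishing of all boundary terms) supplied by the global bound \eqref{Stab}.
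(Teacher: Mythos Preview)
Your approach is essentially the paper's: the same virial functional $\mathcal I(t)$ (up to sign), the same localized-energy Lyapunov functional, and the same two-step logic (virial gives the space--time bound, the energy identity upgrades $\liminf$ to a true limit). Framing everything through the exact conservation laws $\partial_t(vw/Q)=\partial_x\big((v^2-1)/Q\big)$ and $\partial_t\big((1+w^2)/Q\big)=\partial_x(vw/Q)$ is in fact a cleaner route to the paper's Lemmas~2.1 and~2.2, and your compactly supported cutoff $\chi$ in place of the paper's $\operatorname{sech}^4$ is a harmless variant.

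There is, however, a real gap in your absorption of the deformation term. From ``$y\operatorname{sech}^2 y$ is bounded'' you only obtain
\[
\Big|\frac{\dot\lambda}{\lambda}\int \frac{x}{\lambda}\operatorname{sech}^2\!\Big(\frac{x}{\lambda}\Big)\frac{vw}{Q}\,dx\Big|\ \lesssim\ \frac{|\dot\lambda|}{\lambda}\int (v^2+w^2),
\]
\emph{without} the $\operatorname{sech}^2(x/\lambda)$ weight; the pointwise inequality $|y|\operatorname{sech}^2 y\lesssim \operatorname{sech}^2 y$ that your stated bound would require is simply false. Since the coercive term carries the $\operatorname{sech}^2$ weight while the unweighted $\int(v^2+w^2)$ can be much larger, the deformation is not in general a small fraction of the main term, and after time integration the resulting error $\frac{|\dot\lambda|}{\lambda}\,\ve^2\sim \ve^2/t$ is not integrable either. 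The paper repairs this with a Young-type splitting: write
\[
\frac{|x|}{\lambda}\operatorname{sech}^2\!\Big(\frac{x}{\lambda}\Big)|v||w|=\Big(\frac{|x|}{\lambda}\operatorname{sech}\!\Big(\frac{x}{\lambda}\Big)|v|\Big)\cdot\Big(\operatorname{sech}\!\Big(\frac{x}{\lambda}\Big)|w|\Big),
\]
absorb the $w$-factor into the coercive term, and keep a remainder $\lesssim \dfrac{\lambda(t)}{t^2}\,\ve^2\sim \dfrac{\ve^2}{t\log^2 t}$, which \emph{is} integrable. (This, and not merely ``$\dot\lambda\to0$'', is what actually dictates the logarithmic loss: one needs $\lambda(t)/t^2$ integrable while $1/\lambda(t)$ is not.) With this single correction your argument goes through and coincides with the paper's proof.
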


Decay estimate \eqref{AS} in Theorem \ref{TH1} is almost sharp, in the sense that solution $u_\pm(t,x)$ given in \eqref{upm} are natural counterexamples to time-decay, with both solutions moving along extremal light cones. The interval $I(t)$ can be slightly improved, see \cite{MPP} for more details, but it seems hard to obtain decay inside the whole light cone $|x| \leq t-a$, for some $a>0$. Complementing these results, strictly outside of the light cone, it is clear that no interesting dynamics occurs (because of finite speed of propagation).

\medskip

Stability condition \eqref{Stab} is ensured for instance by applying Lindblad's second theorem in \cite{Lindblad}; note that it is highly unlikely 
to obtain \eqref{AS} for data in the ``energy space'' $H^1 \times L^2$ only because of the scaling critical regularity 
in dimension $n$ given by $H^{n/2 +1} \times  H^{n/2 }$. Actually, from the proof, and because of the Sobolev embedding,
we will only need \eqref{Stab} to be satisfied in $ \dot H^{s+1} \times \dot H^{s}$, $s>\frac12$. See also Stefanov \cite{Ste} for a detailed account of equation \eqref{BI} posed on Sobolev spaces only; in that case only results for dimensions $n\geq 3$ are available.

\medskip

As a by-product of Theorem \ref{AS},  one can also get a mild rate of decay for the solution. It can be proved (see \eqref{Better_decay}) that for any $c_0>0$,
\[
\int_2^\infty \!\! \int e^{-c_0|x|} ( (\partial_x u)^2 + (\partial_t u)^2) (t,x)dx dt \lesssim_{c_0} \ve^2,
\]
which describes an averaged rate of local $L^2$ decay for the dynamics. 

\medskip

We prove this result by following very recent developments concerning the decay of solutions in  $1+1$ dimensional scalar field models. Kowalczyk, Martel and 
the second author showed in \cite{KMM,KMM1} that well chosen Virial functionals can describe in great generality the
decay mechanism for models where standard scattering is not available (i.e. there is modified scattering), either because the dimension 
is too small, or the nonlinearity is long range. Moreover, this decay mechanism also describes ``nonlinear scattering'', in the sense 
that solutions like \eqref{upm} are also discarded by Theorem \ref{TH1} inside $I(t)$, $t\to+\infty$. Previous Virial-type decay estimates 
were obtained by Martel-Merle and Merle-Rapha\"el \cite{MM,MR} in the case of generalized KdV and nonlinear Schr\"odinger equations. Moreover, 
the results proved in this note and in \cite{KMM,KMM1,MM,MR} apply to equations which have long range nonlinearities, as well as very low or null 
decay rates. In this sense, Theorem \ref{TH1} shows (local) decay even when there is no general decay, and as far as we know, it is a first application 
of these new methods to quasilinear equations. See also \cite{MPP} for another application of this technique to the case of Boussinesq equations, a 
fourth order wave-fluid model. Additionally, unlike other wave-like models \cite{AMP}, no small solitary wave nor breather-like solution persists in 
time inside the proper light cone $I(t)$.

\subsection{Organization of this paper} This note is planned as follows. In Section \ref{2} we present some preliminary lemmas, proving 
a Virial identity (Lemma \ref{derivada}), which we will need in the proof of the Main Theorem. In Section \ref{22} we show that the 
$\dot H^1 \times L^2$ local norm of the solution must be integrable in time; finally Section \ref{3} deals with the end of the proof of Theorem \ref{TH1}.

\bigskip

\section{Integration of the dynamics}\label{2}

\medskip

\subsection{Virial identity}\label{21}

The purpose of this section is to present a new Virial identity  which we will need in the proof of Theorem \ref{TH1}.

\medskip

In what follows, we consider $t\geq 2$ only, and
\be\label{la}
\la(t):= \frac{C t}{\log^2 t}, \qquad \frac{\la'(t)}{\la(t)} = \frac{1}{t} \Big(1 -\frac{2}{\log t} \Big).
\ee

We  introduce a new Virial identity for the Born-Infeld equation \eqref{BI}. 
Indeed, let $\varphi := \tanh$, and let $\mathcal{I}$ be defined as   
\be\label{vir}
\mathcal{I}(t) ~   :=  ~ {} - \int  \varphi \Big( \frac{x}{\la(t)}\Big) \frac{ \partial_tu \,  \partial_x u}{(1+ (\partial_x u)^2-(\partial_t u)^2)^{1/2}}.
\ee
Clearly $\mathcal{I}(t)$ is well defined as long as \eqref{Stab} is satisfied. Here we use the fact that both 
$\partial_x u$ and $\partial_t u$ are small in $L^\infty$ thanks to the Sobolev embedding. Moreover.
\[
\sup_{t\in \R} |\mathcal{I}(t)| \lesssim \ve^2. 
\]
Note additionally that the denominator in \eqref{vir} contains the gradient of $u$ contracted with the standard Minkowski metric. A time-dependent weight was also considered in \cite{MMT}, but with different goals.

\medskip

The choice of $\mathcal I(t)$ is partly motivated by the energy identity that one obtains in \eqref{phiEt}. %For the proof of the main result
%we will need the following preliminary results:

\begin{lem}[Virial identity]\label{derivada}
We have
\be\label{dI}
\begin{aligned}
\frac{d}{dt} \mathcal{I} (t)  = &~  \frac{1}{\la(t)} \int \varphi' \Big( \frac{x}{\la(t)}\Big)  \Bigg( 1 -\frac{1 - (\partial_t u)^2}{(1+ (\partial_x u)^2-(\partial_t u)^2)^{1/2}}  \Bigg)\\
& ~ {}  + \frac{\la'(t)}{\la(t)} \int \frac{x}{\la(t)}  \varphi' \Big( \frac{x}{\la(t)}\Big) \frac{ \partial_tu \,  \partial_x u}{(1+ (\partial_x u)^2-(\partial_t u)^2)^{1/2}}. 
\end{aligned}
\ee
\end{lem}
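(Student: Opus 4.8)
The plan is to differentiate $\mathcal{I}(t)$ directly in time, exploiting the minimal-surface form \eqref{MS} of the equation to handle the time derivative of the quasilinear flux. Writing $\mathcal{I}(t) = -\int \varphi(x/\la(t))\, \partial_t u\, F$, where I abbreviate $F := \partial_x u / (1+(\partial_x u)^2-(\partial_t u)^2)^{1/2}$, the derivative splits into two structurally distinct contributions. The first comes from differentiating the weight $\varphi(x/\la(t))$ in time, which by the chain rule produces the factor $-\frac{x\la'(t)}{\la(t)^2}\varphi'(x/\la(t))$; this will be the source of the entire second line of \eqref{dI}, since $-x\la'/\la^2 = -(\la'/\la)(x/\la)(1/\la)$ combines with the $-\int \varphi\,\partial_t u\,\partial_x u/(\cdots)$ integrand to give exactly $\frac{\la'}{\la}\int \frac{x}{\la}\varphi'(x/\la) \frac{\partial_t u\,\partial_x u}{(\cdots)}$ after the two sign flips cancel. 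The second contribution comes from differentiating $\partial_t u\, F$ in time.

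For that second piece I would write $\partial_t(\partial_t u\, F) = \partial_t^2 u\, F + \partial_t u\, \partial_t F$, and here is where the equation enters. The minimal-surface formulation \eqref{MS} states precisely that $\partial_t\big(\partial_t u/(1+(\partial_x u)^2-(\partial_t u)^2)^{1/2}\big) = \partial_x F$, so I would prefer to organize the computation so that $\partial_t u$ is paired with the \emph{time-flux} $G := \partial_t u/(1+(\partial_x u)^2-(\partial_t u)^2)^{1/2}$ and $\partial_x u$ with $F$, and then use $\partial_t G = \partial_x F$. Concretely, after the weight-derivative term is set aside, the remaining integral is $-\int \varphi(x/\la)\,\partial_t(\partial_x u\, G)$ (rewriting the integrand of $\mathcal{I}$ as $\partial_x u\, G$, which equals $\partial_t u\, F$). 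Expanding gives $-\int \varphi\,\partial_{tx}^2 u\, G - \int \varphi\,\partial_x u\,\partial_x F$, using the equation in the second term. I would then integrate by parts in $x$ in both terms, moving a derivative off so that the spatial derivative of $\varphi(x/\la)$, namely $\frac{1}{\la}\varphi'(x/\la)$, appears. The goal is that all the genuinely quasilinear bulk terms cancel pairwise and only the $\frac{1}{\la}\varphi'$-weighted remainder survives.

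The key algebraic fact driving the cancellation is that, collecting the surviving integrands, one should obtain $\frac{1}{\la}\varphi'(x/\la)\big(\partial_x u\, G - \text{(something)}\big)$ which simplifies to the bracket $1 - \frac{1-(\partial_t u)^2}{(1+(\partial_x u)^2-(\partial_t u)^2)^{1/2}}$. I expect this to follow from the pointwise identity obtained by noting that $\partial_x u\cdot\partial_x u/(\cdots)^{1/2}$ combines with the $+1$ so that $(1+(\partial_x u)^2-(\partial_t u)^2)/(\cdots)^{1/2} - (\partial_t u)^2/(\cdots)^{1/2}\cdots$; more precisely $(\partial_x u) F = (\partial_x u)^2/(\cdots)^{1/2}$ and $1 - (1-(\partial_t u)^2)/(\cdots)^{1/2} = ((\cdots)^{1/2} - 1 + (\partial_t u)^2)/(\cdots)^{1/2}$, and since $(\cdots) = 1+(\partial_x u)^2 - (\partial_t u)^2$ one checks these reconcile. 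I would verify this pointwise reduction carefully, as it is the crux.

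The main obstacle is the bookkeeping in the integration by parts: one must be certain that the boundary terms vanish (guaranteed by the $H^{s+1}\times H^s$ decay of the solution, so that $\partial_x u,\partial_t u \to 0$ at spatial infinity under \eqref{Stab}), that each cross term from $\partial_t F$ is matched by a term from the equation, and that the only uncancelled residue is the $\frac{1}{\la}\varphi'$ factor times the stated Minkowski-type bracket. I would double-check the final pointwise simplification of the bracket $1 - \frac{1-(\partial_t u)^2}{(1+(\partial_x u)^2-(\partial_t u)^2)^{1/2}}$ by direct substitution, since a sign or a missing $(\partial_x u)^2$ term there would break the whole identity. Throughout, the smallness of $\partial_x u,\partial_t u$ in $L^\infty$ ensures the denominator stays bounded away from zero, so all manipulations are justified.
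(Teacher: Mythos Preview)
Your strategic choice to route the computation through the conservation-law form \eqref{MS}, writing the integrand as $\partial_x u\,G$ with $G=\partial_t u/D^{1/2}$, $F=\partial_x u/D^{1/2}$, $D=1+(\partial_x u)^2-(\partial_t u)^2$, and using $\partial_t G=\partial_x F$, is genuinely different from the paper's proof. The paper instead differentiates by quotient rule to obtain the numerator $Q$ in \eqref{Q}, substitutes for $\partial_t^2 u$ using the original form \eqref{BI} of the equation, and proves the algebraic identity \eqref{Qnum} by brute force before integrating by parts twice. Your route is more structural and avoids that long calculation.

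However, your execution has a concrete gap. After one integration by parts in each of the two terms $-\int\varphi\,\partial_{tx}^2 u\,G$ and $-\int\varphi\,\partial_x u\,\partial_x F$, the surviving bulk is
\[
\int \varphi\Big(\frac{x}{\la(t)}\Big)\big(\partial_x^2 u\,F + \partial_t u\,\partial_x G\big),
\]
and these terms do \emph{not} cancel pairwise as you assert. What actually happens is that a short computation gives the exact-derivative identity
\[
\partial_x^2 u\,F + \partial_t u\,\partial_x G \;=\; \partial_x\Big(\partial_x u\,F + D^{-1/2}-1\Big),
\]
so a \emph{second} integration by parts is needed, producing the additional $\varphi'$-contribution $-\frac{1}{\la(t)}\int\varphi'\big(\partial_x u\,F + D^{-1/2}-1\big)$. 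Only then do the pieces combine: together with the first-round contribution $\frac{1}{\la(t)}\int\varphi'\big(\partial_x u\,F+\partial_t u\,G\big)$, the $\partial_x u\,F$ terms cancel and one is left with
\[
\frac{1}{\la(t)}\int \varphi'\Big(\frac{x}{\la(t)}\Big)\Big(1+\partial_t u\,G - D^{-1/2}\Big)
= \frac{1}{\la(t)}\int \varphi'\Big(\frac{x}{\la(t)}\Big)\Big(1-\frac{1-(\partial_t u)^2}{D^{1/2}}\Big),
\]
which is the desired bracket. In particular, the pointwise ``reconciliation'' you attempt---matching $(\partial_x u)^2/D^{1/2}$ with $(D^{1/2}-1+(\partial_t u)^2)/D^{1/2}$---is false in general (it would force $D^{1/2}=D$). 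The missing idea is exactly the exact-derivative structure above and the second integration by parts it enables.
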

\begin{proof}%[Proof of Lemma \ref{derivada}]
We readily have
\be\label{vt}
\begin{aligned}
 \frac{d}{dt}\mathcal{I}(t) = &~ {} -\int \varphi \Big( \frac{x}{\la(t)}\Big) Q(\partial_t u, \partial_x u, \partial_{t}^2 u, \partial_{tx}^2 u) \\
 & +  \frac{\la'(t)}{\la(t)} \int \frac{x}{\la(t)}  \varphi' \Big( \frac{x}{\la(t)}\Big) \frac{ \partial_tu \,  \partial_x u}{(1+ (\partial_x u)^2-(\partial_t u)^2)^{1/2}},
\end{aligned}
\ee
where 
\be\label{Q}
\begin{aligned}
& Q(\partial_t u, \partial_x u, \partial_{t}^2 u, \partial_{tx}^2 u):= \\
& \quad= \frac{ (\partial_{tx}^2 u \, \partial_t u +  \partial_x u\, \partial_t^2 u)(1+ (\partial_x u)^2-(\partial_t u)^2)-(\partial_x u\, \partial_{tx}^2 u  - \partial_t u \,\partial_t^2 u )\partial_x u \,\partial_t u  }{(1+ (\partial_x u)^2-(\partial_t u)^2)^{3/2}}.
\end{aligned}
\ee
The second term in \eqref{vt} is already present in \eqref{dI}, so we focus only on the first term leading to the numerator $Q(\partial_t u, \partial_x u, \partial_{t}^2 u, \partial_{tx}^2 u)$. We rewrite it in the following compact form:
\be\label{Qnum}
\begin{aligned}
& (\partial_{tx}^2 u\, \partial_t u +  \partial_x u \,\partial_t^2 u)(1+ (\partial_x u)^2-(\partial_t u)^2)-(\partial_x u \,\partial_{tx}^2 u  - \partial_t u \partial_t^2 u )\partial_x u\, \partial_t u  =\\
& \quad  = -(1+(\partial_x u)^2-(\partial_t u)^2) \partial_x(1-(\partial_t u)^2)  \\
& \quad \quad + (1-(\partial_t u)^2)\partial_x\Big(\frac{1}{2}(1+(\partial_x u)^2-(\partial_t u)^2)\Big).
 \end{aligned}
\ee
We assume this identity, see below for the proof. Hence, using \eqref{Qnum} we simplify $\frac{d}{dt}\mathcal I(t)$ as follows: 
\be\label{Pt2}
\begin{aligned}
\frac{d}{dt}\mathcal I(t) = &~ {} \int \varphi \Big( \frac{x}{\la(t)}\Big) \frac{(1+(\partial_x u)^2-(\partial_t u)^2) \partial_x(1-(\partial_t u)^2)  }
{(1+(\partial_x u)^2-(\partial_t u)^2)^{3/2}} \\
& {}-\int \varphi \Big( \frac{x}{\la(t)}\Big) \frac{(1-(\partial_t u)^2)\partial_x\Big(\frac{1}{2}(1+(\partial_x u)^2-(\partial_t u)^2)\Big)}
{(1+(\partial_x u)^2-(\partial_t u)^2)^{3/2}}\\
& +  \frac{\la'(t)}{\la(t)} \int_\R \frac{x}{\la(t)}  \varphi' \Big( \frac{x}{\la(t)}\Big) \frac{ \partial_tu \, 
\partial_x u}{(1+ (\partial_x u)^2-(\partial_t u)^2)^{1/2}} \\
=& ~ A_1+A_2+B. 
\end{aligned}
\ee
The important term above is $A_1+A_2$.  We have 
\[
\begin{aligned}
A_1+A_2& = -\int \varphi \Big( \frac{x}{\la(t)}\Big) \frac{\partial_x((\partial_t u)^2)}{(1+ (\partial_x u)^2-(\partial_t u)^2)^{1/2}} \\
& \quad - \frac{1}{2}\int \varphi \Big( \frac{x}{\la(t)}\Big) (1-(\partial_t u)^2) \frac{\partial_x(1+(\partial_x u)^2-(\partial_t u)^2)}{(1+(\partial_x u)^2-(\partial_t u)^2)^{3/2}}\\
&= -\int \varphi  \Big( \frac{x}{\la(t)}\Big)\frac{\partial_x((\partial_t u)^2)}{(1+ (\partial_x u)^2-(\partial_t u)^2)^{1/2}}  \\
& \quad + \int \varphi \Big( \frac{x}{\la(t)}\Big)(1-(\partial_t u)^2)\partial_x\Bigg( \frac{1}{(1+ (\partial_x u)^2-(\partial_t u)^2)^{1/2}}-1\Bigg).\\
\end{aligned}
\]

\medskip

Now integrating by parts the second integral we get
\[
\begin{aligned}
A_1+A_2& = -\int \varphi \Big( \frac{x}{\la(t)}\Big) \frac{\partial_x((\partial_t u)^2)}{(1+ (\partial_x u)^2-(\partial_t u)^2)^{1/2}} \\
&\quad - \frac1{\la(t)}\int \varphi' \Big( \frac{x}{\la(t)}\Big) (1-(\partial_t u)^2)\Bigg( \frac{1}{(1+ (\partial_x u)^2-(\partial_t u)^2)^{1/2}}-1\Bigg) \\
& \quad + \int \varphi \Big( \frac{x}{\la(t)}\Big) \partial_x((\partial_t u)^2)\Bigg( \frac{1}{(1+ (\partial_x u)^2-(\partial_t u)^2)^{1/2}}-1\Bigg).
\end{aligned}
\]
We continue simplifying,
\[
\begin{aligned}
A_1+A_2&= - \frac1{\la(t)}\int \varphi' \Big( \frac{x}{\la(t)}\Big) (1-(\partial_t u)^2)\Bigg( \frac{1}{(1+ (\partial_x u)^2-(\partial_t u)^2)^{1/2}}-1\Bigg)\\
&\quad  - \int \varphi \Big( \frac{x}{\la(t)}\Big) \partial_x((\partial_t u)^2).
\end{aligned}
\]
Finally, integrating again by parts the second integral, we obtain
\[
\begin{aligned}
A_1+A_2 &= -\frac1{\la(t)} \int \varphi'\Big( \frac{x}{\la(t)}\Big) \Bigg((1-(\partial_t u)^2)\Bigg( \frac{1}{(1+ (\partial_x u)^2-(\partial_t u)^2)^{1/2}}-1\Bigg) -(\partial_t u)^2\Bigg)\\
&=  \frac1{\la(t)} \int \varphi'\Big( \frac{x}{\la(t)}\Big)\Bigg(1 - \frac{1-(\partial_t u)^2}{(1+ (\partial_x u)^2-(\partial_t u)^2)^{1/2}} \Bigg).
\end{aligned}
\]
Collecting $A_1+A_2+B$ we get the desired result.
\end{proof}

\medskip
%
%Now considering a small $u\in H^2\times H^2$ solution  of \eqref{BI}, we get
%
%\be\label{vt3}\begin{aligned}
%\mathcal{I}_t[u]& \approx 
%\int_\R \phi_x\big[u_t^2 + 1 - (1 - \frac{u_{x}^2-u_{t}^2}{2})\big]dx = \int_\R \phi_x(\frac{u_{x}^2+u_{t}^2}{2})dx \\
%\end{aligned}\ee\noindent
%

\begin{proof}[Proof of \eqref{Qnum}] 
We have
%\be\label{numPt}
\[
\begin{aligned}
&(\partial_{xt}^2u\partial_tu + \partial_xu\partial_{t}^2u)(1+(\partial_x u)^2-(\partial_t u)^2)-
(\partial_x u\partial_{xt}^2u-\partial_tu\partial_{t}^2u)\partial_x u\partial_t u =\\
&\quad = \partial_{xt}^2u\partial_t u(1+(\partial_x u)^2-(\partial_t u)^2) + \partial_xu \partial_{t}^2 u(1+(\partial_x u)^2-(\partial_t u)^2) \\
&\quad  \quad - \partial_x u\partial_t u(\partial_x u \partial_{xt}^2 u-\partial_t u\partial_{t}^2 u)\\
&\quad = \partial_{xt}^2 u\partial_t u(1+(\partial_x u)^2-(\partial_t u)^2) + \partial_x u\partial_{t}^2 u(1+(\partial_x u)^2) 
-  \partial_x u\partial_{t}^2 u(\partial_t u)^2 \\
&\quad  \quad - (\partial_x u)^2\partial_t u\partial_{xt}^2 u + \partial_x u(\partial_t u)^2\partial_{t}^2 u\\
&\quad = \partial_{xt}^2 u\partial_t u(1+(\partial_x u)^2-(\partial_t u)^2) + \partial_x u((1-(\partial_t u)^2)\partial_{x}^2 u + 
2\partial_x u\partial_t u\partial_{x}^2 u) \\
& \quad \quad - (\partial_x u)^2\partial_t u\partial_{xt}^2 u\\
&\quad = \partial_{xt}^2 u\partial_t u(1+(\partial_x u)^2-(\partial_t u)^2) + (1-(\partial_t u)^2)\partial_x u\partial_{x}^2 u
+ 2(\partial_x u)^2\partial_t u\partial_{xt}^2 u \\
&\quad  \quad - (\partial_x u)^2\partial_t u\partial_{xt}^2 u\\
&\quad = (\partial_x u)^2(2\partial_t u\partial_{xt}^2 u) + \partial_t u\partial_{xt}^2 u(1-(\partial_t u)^2) + (1-(\partial_t u)^2)\partial_x u\partial_{x}^2 u\\
&\quad = (\partial_x u)^2(2\partial_t u\partial_{xt}^2 u) + (2\partial_t u\partial_{xt}^2 u)(1-(\partial_t u)^2) \\
&\quad \quad + (1-(\partial_t u)^2)(\partial_x u\partial_{x}^2 u-\partial_t u\partial_{xt}^2 u)\\
&\quad = (1+(\partial_x u)^2-(\partial_t u)^2)(2\partial_t u\partial_{xt}^2 u) + (1-(\partial_t u)^2)(\partial_x u\partial_{x}^2 u-\partial_t u\partial_{xt}^2 u)\\
&\quad = -(1+(\partial_x u)^2-(\partial_t u)^2)\partial_x(1-(\partial_t u)^2) \\
& \quad \quad + (1-(\partial_t u)^2)\partial_x\Big(\frac{1}{2}(1+(\partial_x u)^2-(\partial_t u)^2)\Big).
\end{aligned}
\]
\end{proof}

\bigskip

%% LEMMA 4.2
\subsection{Energy identity} Let consider $\phi := \varphi'^2 =\sech^4$. The following Lemma will be useful
to prove the integral estimate \eqref{integral} in  Section \ref{22}.
%We will also need the following identity: 

\begin{lem}\label{phiE} Let $(u,\partial_tu)$ be a global small solution of \eqref{BI}. Then, 
\be\label{phiEt}
\begin{aligned}
&\frac{d}{dt}\int \phi\Big(\frac{x}{\lambda(t)} \Big)\Bigg( \frac{1 + (\partial_x u)^2}{(1+ (\partial_x u)^2-(\partial_t u)^2)^{1/2}} -1\Bigg) = \\
& =  -\frac{1}{\lambda(t)} \int \phi' \Big(\frac{x}{\lambda(t)} \Big) \frac{ \partial_x u \, \partial_t u }{(1+ (\partial_x u)^2-(\partial_t u)^2)^{1/2}} \\
&  \quad - \frac{\lambda'(t)}{\lambda(t)} \int \frac{x}{\lambda(t)}\phi' \Big(\frac{x}{\lambda(t)}\Big)\Bigg( \frac{1 + (\partial_x u)^2}{(1+ (\partial_x u)^2-(\partial_t u)^2)^{1/2}} -1\Bigg). %\approx \int_\R\Big(-\frac{x\lambda^{'}(t)}{\lambda^2}\phi^{'}(\frac{x}{\lambda(t)})
%\big[\frac{1}{2}(u_{x}^2+u_{t}^2))\big]-\frac{1}{\lambda}\phi^{'}(\frac{x}{\lambda(t)})
%\big[ u_xu_{t}\big].
\end{aligned}\ee
\end{lem}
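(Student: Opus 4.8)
The plan is to compute the time derivative of the energy functional directly, treating it as a bulk integral that depends on time both through the solution $(u,\partial_t u)(t)$ and through the scaling parameter $\lambda(t)$ inside $\phi(x/\lambda(t))$. The $\lambda$-dependence is routine: differentiating $\phi(x/\lambda(t))$ in $t$ produces $-\frac{\lambda'(t)}{\lambda(t)}\frac{x}{\lambda(t)}\phi'(x/\lambda(t))$, which accounts exactly for the last term on the right-hand side of \eqref{phiEt}. So the real work is to show that the contribution from differentiating the integrand $\frac{1+(\partial_x u)^2}{(1+(\partial_x u)^2-(\partial_t u)^2)^{1/2}}-1$ in time, and then integrating by parts in $x$, collapses to the single term involving $\phi'$ and the momentum density $\frac{\partial_x u\,\partial_t u}{(1+(\partial_x u)^2-(\partial_t u)^2)^{1/2}}$.

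First I would observe that this is a local energy--momentum identity: the quantity $\frac{1+(\partial_x u)^2}{(1+(\partial_x u)^2-(\partial_t u)^2)^{1/2}}$ is (up to the additive constant) the energy density for the Lagrangian \eqref{Lag}, while $\frac{\partial_x u\,\partial_t u}{(1+(\partial_x u)^2-(\partial_t u)^2)^{1/2}}$ is the associated momentum flux. The Born--Infeld equation \eqref{BI}, equivalently the minimal-surface form \eqref{MS}, is precisely the statement that the unweighted energy satisfies a local conservation law $\partial_t(\text{energy density}) = \partial_x(\text{momentum density})$. I would therefore compute $\partial_t\big(\tfrac{1+(\partial_x u)^2}{(1+(\partial_x u)^2-(\partial_t u)^2)^{1/2}}\big)$ explicitly, writing $w:=1+(\partial_x u)^2-(\partial_t u)^2$ for brevity, expand the numerator using the chain rule, and then substitute the equation \eqref{BI} to eliminate the second-order time derivative $\partial_t^2 u$. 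The goal of this algebraic step is to reduce the time derivative of the energy density to a total $x$-derivative, namely $\partial_x\big(\tfrac{\partial_x u\,\partial_t u}{w^{1/2}}\big)$, mirroring the structure already exploited in the proof of \eqref{Qnum}.

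Once the local conservation law $\partial_t(\text{energy density}) = \partial_x(\text{momentum density})$ is established, I would insert the weight $\phi(x/\lambda(t))$ and integrate by parts in $x$. The weighted time derivative splits as
\[
\frac{d}{dt}\int \phi\Big(\tfrac{x}{\lambda}\Big)(\text{energy density}) = \int \phi\Big(\tfrac{x}{\lambda}\Big)\partial_t(\text{energy density}) - \frac{\lambda'(t)}{\lambda(t)}\int \tfrac{x}{\lambda}\phi'\Big(\tfrac{x}{\lambda}\Big)(\text{energy density}),
\]
and in the first integral I would replace $\partial_t(\text{energy density})$ by $\partial_x(\text{momentum density})$ and integrate by parts, moving the derivative onto $\phi(x/\lambda)$ to produce the factor $-\frac{1}{\lambda(t)}\phi'(x/\lambda)$ times the momentum density. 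The boundary terms vanish by the decay of $(u,\partial_t u)(t)$ in the Sobolev space from \eqref{Stab}, and the $-1$ in the energy density is annihilated upon differentiation in $t$ since it contributes only a constant.

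The main obstacle I anticipate is the algebraic verification that $\partial_t(\text{energy density})$ is an exact $x$-derivative after substituting \eqref{BI}; this is the analogue of the purely computational identity \eqref{Qnum}, and getting the cancellations right requires care in expanding the quotient rule on $w^{-1/2}$ and correctly pairing the mixed terms $\partial_x u\,\partial_t u\,\partial_{tx}^2 u$ against the equation. The scaling terms and the integration by parts are straightforward by comparison, so the whole proof essentially rests on carrying out this one computation cleanly, exactly as was done in the proof of Lemma \ref{derivada}.
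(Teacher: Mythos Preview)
Your proposal is correct and follows essentially the same route as the paper: split the time derivative into the $\lambda$-contribution (giving the last term of \eqref{phiEt} directly) and the contribution from differentiating the energy density, then verify the local conservation law $\partial_t\big(\tfrac{1+(\partial_x u)^2}{w^{1/2}}\big)=\partial_x\big(\tfrac{\partial_x u\,\partial_t u}{w^{1/2}}\big)$ by expanding, substituting \eqref{BI} to eliminate $\partial_t^2 u$, and recognizing the result as a total $x$-derivative, after which a single integration by parts against $\phi(x/\lambda)$ yields the first term. The paper carries out exactly this computation (its $E_1$, $E_2$, and the algebraic reduction of $\widetilde Q$), so there is no meaningful difference in approach.
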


\begin{proof}%[Proof of Lemma \ref{phiE}]
We have 
\[
\begin{aligned}
 &\frac{d}{dt}\int \phi\Big(\frac{x}{\lambda(t)} \Big)\Bigg( \frac{1 + (\partial_x u)^2}{(1+ (\partial_x u)^2-(\partial_t u)^2)^{1/2}} -1\Bigg)\\
 &\quad = \int \partial_t\Big( \phi \Big(\frac{x}{\lambda(t)} \Big) \Big)\Bigg( \frac{1 + (\partial_x u)^2}{(1+ (\partial_x u)^2-(\partial_t u)^2)^{1/2}} -1\Bigg) \\
 &\quad \quad +\int \phi \Big(\frac{x}{\lambda(t)} \Big)\partial_t\Bigg( \frac{1 + (\partial_x u)^2}{(1+ (\partial_x u)^2-(\partial_t u)^2)^{1/2}} -1\Bigg) \\
 & \quad =: ~ E_1 + E_2 .
\end{aligned}
\]
As for $E_1$, we easily have
\[
\begin{aligned}
& \int \partial_t\Big( \phi \Big(\frac{x}{\lambda(t)} \Big) \Big)\Bigg( \frac{1 + (\partial_x u)^2}{(1+ (\partial_x u)^2-(\partial_t u)^2)^{1/2}} -1\Bigg) = \\
& \quad = - \frac{\lambda'(t)}{\lambda(t)} \int \frac{x}{\lambda(t)}\phi' \Big(\frac{x}{\lambda(t)}\Big)\Bigg( \frac{1 + (\partial_x u)^2}{(1+ (\partial_x u)^2-(\partial_t u)^2)^{1/2}} -1\Bigg),
\end{aligned}
\]
which is the second term in the right hand side of \eqref{phiEt}.

\medskip

Now, for $E_2$ we have
\be\label{E_2}
\begin{aligned}
& E_2= \int \phi \Big(\frac{x}{\lambda(t)} \Big)\partial_t\Bigg( \frac{1 + (\partial_x u)^2}{(1+ (\partial_x u)^2-(\partial_t u)^2)^{1/2}} -1\Bigg) \\
& \quad = \int  \phi \Big(\frac{x}{\lambda(t)} \Big) \widetilde Q(\partial_x u, \partial_t u, \partial_t^2 u, \partial_{tx}^2 u),
\end{aligned}
\ee
where 
\be\label{tQ}
\begin{aligned}
&  \widetilde Q(\partial_x u, \partial_t u, \partial_t^2 u, \partial_{tx}^2 u) := \\
& \quad = \Bigg( \frac{2\partial_{xt}^2  u\, \partial_x u(1+ (\partial_x u)^2-(\partial_t u)^2)-(1+(\partial_x u)^2)(\partial_x  u\, \partial_{xt}^2 u-\partial_t u\, \partial_{t}^2 u)}{(1+ (\partial_x u)^2-(\partial_t u)^2)^{3/2}} \Bigg).
\end{aligned}
\ee
Similarly to \eqref{Q}, using \eqref{BI} we have
\[
\begin{aligned}
&2\partial_{xt}^2 u \, \partial_x u(1+ (\partial_x u)^2 - (\partial_t u)^2)-(1+(\partial_x u)^2)(\partial_x u \, \partial_{xt}^2 u-\partial_t u \, \partial_{t}^2 u)\\
&= 2\partial_{xt}^2 u\, \partial_x u(1+ (\partial_x u)^2) - 2\partial_{xt}^2 u\, \partial_x u(\partial_{t} u)^2 \\
& \quad + \partial_t u(1+(\partial_x u)^2)\partial_{t}^2 u 
\quad - (1+(\partial_x u)^2)\partial_x u\, \partial_{xt}^2 u\\
&= 2\partial_{xt}^2 u \, \partial_x u(1+ (\partial_x u)^2) - 2\partial_{xt}^2 u\, \partial_x u(\partial_{t} u)^2 \\
& \quad  + \partial_t u\big((1-(\partial_t u)^2)\partial_x^2 u+2\partial_x u\, \partial_t u\, \partial_{xt}^2 u\big) - (1+(\partial_x u)^2)\partial_x u\, \partial_{xt}^2 u\\
&= \partial_{xt}^2 u\, \partial_x u(1+ (\partial_x u)^2) + \partial_t u(1-(\partial_t u)^2)\partial_x^2 u\\
&= \partial_{xt}^2 u\, \partial_x u(1+ (\partial_x u)^2-(\partial_t u)^2) + \partial_x u\, \partial_x^2 u\, (\partial_t u)^2 \\
& \quad + \partial_t u \, (1+(\partial_x u)^2-(\partial_t u)^2)\partial_x^2 u -\partial_t u(\partial_x u)^2\partial_x^2 u\\
&= (\partial_{xt}^2 u\, \partial_x u+\partial_t u\, \partial_x^2 u)(1+(\partial_x u)^2-(\partial_t u)^2) 
+ \partial_x u\, \partial_t u(\partial_{xt}^2 u\, \partial_t u-\partial_x u\, \partial_x^2 u)\\
&=\partial_x(\partial_t u\, \partial_x u) (1+(\partial_x u)^2-(\partial_t u)^2) 
- \frac12\partial_x u\, \partial_t u\, \partial_x(1+(\partial_x u)^2-(\partial_t u)^2) .
\end{aligned}
\]
Replacing this last identity in $E_2$ in \eqref{E_2}, 
\[
\begin{aligned}
 E_2 =& ~ \int \phi\Big(\frac{x}{\lambda(t)} \Big)\Bigg(\frac{\partial_x(\partial_t u\, \partial_x u)}{(1+ (\partial_x u)^2-(\partial_t u)^2)^{1/2}} \\
  & \qquad \qquad \qquad \quad+ \partial_x u\, \partial_t u\,\partial_x\Big(\frac{1}{(1+ (\partial_x u)^2-(\partial_t u)^2)^{1/2}} \Big)\Bigg).
\end{aligned}
\]  
Integrating by parts,
\[
\begin{aligned}
E_2  = & {} -\frac{1}{\lambda(t)} \int \phi' \Big(\frac{x}{\lambda(t)} \Big) \frac{ \partial_x u \, \partial_t u }{(1+ (\partial_x u)^2-(\partial_t u)^2)^{1/2}}, %\approx \int_\R\Big(-\frac{x\lambda^{'}(t)}{\lambda^2}\phi^{'}(\frac{x}{\lambda(t)})
 \end{aligned}
\]
as desired.
\end{proof}

\subsection{Integration of the dynamics}\label{22}
\medskip

The purpose of this subsection is to show the following integral estimate, which  gives us 
a proper control of the dynamics in time of the norm of the solution of \eqref{BI}:

\begin{lem}\label{integral} 
For $\la(t)$ given as in \eqref{la}, and $(u,\partial_t u)(t)$ satisfying \eqref{Stab}, we have the averaged estimate 
\be\label{Integra}
 \int_2^\infty  \frac{1}{\la(t)} \int \sech^2 \Big( \frac{x}{\la(t)}\Big)  ( (\partial_t u)^2 + (\partial_x u)^2)(t,x) dxdt \lesssim \ve^2.
\ee
Moreover, there exists an increasing sequence of times $t_n\to +\infty$ such that 
\be\label{seq}
\lim_{n\to +\infty} \int \sech^2 \Big( \frac{x}{\la(t_n)}\Big)  ( (\partial_t u)^2 + (\partial_x u)^2) (t_n,x) dx =0.
\ee
\end{lem}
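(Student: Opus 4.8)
The plan is to obtain the averaged estimate \eqref{Integra} by integrating the Virial identity \eqref{dI} in time and controlling all remaining error terms. First I would examine the leading term on the right-hand side of \eqref{dI}, namely
\[
\frac{1}{\la(t)}\int \varphi'\Big(\frac{x}{\la(t)}\Big)\Bigg(1-\frac{1-(\partial_t u)^2}{(1+(\partial_x u)^2-(\partial_t u)^2)^{1/2}}\Bigg),
\]
and show that, up to higher-order corrections, its integrand is comparable to $\frac{1}{\la(t)}\sech^2\big(\frac{x}{\la(t)}\big)((\partial_t u)^2+(\partial_x u)^2)$. Since $\varphi'=\sech^2$, the key is a Taylor expansion of the algebraic weight: writing $a:=(\partial_x u)^2$, $b:=(\partial_t u)^2$, both small in $L^\infty$ by \eqref{Stab} and Sobolev, one checks that
\[
1-\frac{1-b}{(1+a-b)^{1/2}}=\tfrac12(a+b)+O\big((a+b)^2\big),
\]
so that the leading Virial term controls $\frac{1}{\la(t)}\int\sech^2\big(\frac{x}{\la(t)}\big)((\partial_t u)^2+(\partial_x u)^2)$ up to a quartic remainder that is absorbable because $a+b\lesssim\ve^2$ is itself small.

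Next I would handle the second term $B$ in \eqref{dI}. Using $|x|\,|\varphi'(x/\la)|=|x|\sech^2(x/\la)\lesssim\la$ (the factor $x/\la$ times $\sech^2$ is bounded) together with the smallness of $\partial_t u\,\partial_x u$, this term is bounded pointwise by
\[
\Big|\frac{\la'(t)}{\la(t)}\Big|\int \sech^2\Big(\frac{x}{\la(t)}\Big)\frac{|\partial_t u\,\partial_x u|}{(1+(\partial_x u)^2-(\partial_t u)^2)^{1/2}}\lesssim \frac{1}{t}\int\sech^2\Big(\frac{x}{\la(t)}\Big)((\partial_t u)^2+(\partial_x u)^2),
\]
where I used \eqref{la} to see $|\la'/\la|\lesssim 1/t$ and $|x/\la|\sech^2\lesssim\sech$. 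Because $\la(t)=Ct/\log^2 t$ grows like $t$ up to logarithms, one has $\frac{1}{t}\sim\frac{C}{\la(t)\log^2 t}$, so this error term is smaller than the main term by a factor $\log^{-2}t$; in particular it can be absorbed into the left-hand (leading) term for $t$ large, or directly shown integrable. The whole point of the logarithmic correction built into $\la(t)$ is precisely to make this absorption work.

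Combining these two observations, the derivative identity \eqref{dI} yields, for a suitable constant and $t\geq 2$ large,
\[
\frac{d}{dt}\mathcal I(t)\;\gtrsim\;\frac{1}{\la(t)}\int\sech^2\Big(\frac{x}{\la(t)}\Big)\big((\partial_t u)^2+(\partial_x u)^2\big)(t,x)\,dx,
\]
after the error and quartic remainders have been absorbed. Integrating from $2$ to $\infty$ and using the uniform bound $\sup_t|\mathcal I(t)|\lesssim\ve^2$ established just after \eqref{vir}, the left side telescopes to $\mathcal I(\infty)-\mathcal I(2)$, which is $O(\ve^2)$, giving \eqref{Integra}. The statement \eqref{seq} then follows by a standard contradiction/pigeonhole argument: if the spatial integral $g(t):=\int\sech^2(x/\la(t))((\partial_t u)^2+(\partial_x u)^2)\,dx$ were bounded below by some $\de>0$ for all large $t$, then $\int_2^\infty g(t)/\la(t)\,dt\gtrsim\de\int_2^\infty \frac{dt}{\la(t)}=+\infty$, since $\int^\infty \frac{\log^2 t}{Ct}\,dt$ diverges, contradicting \eqref{Integra}; hence $\liminf_{t\to\infty}g(t)=0$ and one extracts the desired sequence $t_n$.

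The main obstacle I anticipate is the rigorous justification that the leading Virial term genuinely dominates, i.e. that the quartic remainder from the Taylor expansion and the term $B$ are truly subordinate rather than comparable. This requires the smallness hypothesis \eqref{Stab} to control the $L^\infty$ norms of $\partial_x u,\partial_t u$ (via Sobolev, using $s>\tfrac12$), together with the precise logarithmic gain $\la'/\la\sim t^{-1}$ versus $\la^{-1}\sim t^{-1}\log^2 t$ from \eqref{la} to defeat the transport error $B$. One must verify that these two smallness mechanisms — amplitude smallness $\ve$ and the logarithmic factor $\log^{-2}t$ — suffice simultaneously to make the sign of $\frac{d}{dt}\mathcal I$ favorable after subtracting the errors; the algebra is routine, but keeping track of which smallness kills which error is the delicate bookkeeping step.
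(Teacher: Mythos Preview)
Your overall strategy matches the paper's proof exactly: use the Virial identity \eqref{dI}, Taylor-expand the main term to extract the positive quadratic form, control the transport error $B=\mathcal I_2$, integrate in time using $\sup_t|\mathcal I(t)|\lesssim\ve^2$, and deduce \eqref{seq} from the divergence of $\int_2^\infty\la(t)^{-1}\,dt$. The expansion step (your $1-\frac{1-b}{(1+a-b)^{1/2}}=\tfrac12(a+b)+O((a+b)^2)$) and the sequence extraction are precisely what the paper does.

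There is, however, a genuine gap in your treatment of $B$. You assert the bound
\[
\Big|\frac{\la'}{\la}\int\frac{x}{\la}\sech^2\Big(\frac{x}{\la}\Big)\frac{\partial_t u\,\partial_x u}{(\cdots)^{1/2}}\Big|\;\lesssim\;\frac{1}{t}\int\sech^2\Big(\frac{x}{\la}\Big)\big((\partial_t u)^2+(\partial_x u)^2\big),
\]
and then absorb it using $1/t\ll 1/\la(t)$. But this displayed inequality is not justified: the pointwise estimate $|y|\sech^2(y)\lesssim\sech^2(y)$ is false. The valid pointwise bounds are $|y|\sech^2(y)\lesssim 1$ (no weight remains) or $|y|\sech^2(y)\lesssim\sech(y)$ (only one power of $\sech$). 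With the first you obtain $\frac{1}{t}\int|\partial_t u\,\partial_x u|\lesssim\frac{\ve^2}{t}$, which is \emph{not} integrable on $[2,\infty)$; with the second you get a $\sech$-weighted integral, which is not dominated by the $\sech^2$-weighted main term, so absorption still fails. Neither of your two suggested mechanisms (absorption, or direct integrability) works as written.

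The paper fixes this with an \emph{asymmetric} Young inequality:
\[
\frac{2}{t}\,\frac{|x|}{\la}\sech^2\!\Big(\frac{x}{\la}\Big)|\partial_t u|\,|\partial_x u|\;\le\;\frac{C\la}{t^2}\cdot\frac{x^2}{\la^2}\sech^2\!\Big(\frac{x}{\la}\Big)(\partial_t u)^2\;+\;\frac{1}{8\la}\sech^2\!\Big(\frac{x}{\la}\Big)(\partial_x u)^2.
\]
The second piece is absorbed into $\mathcal I_1$. The first uses $\sup_y y^2\sech^2(y)<\infty$ together with the \emph{global} bound $\int(\partial_t u)^2\lesssim\ve^2$ from \eqref{Stab} to yield $\frac{C\la}{t^2}\,\ve^2=\frac{C\ve^2}{t\log^2 t}$, which \emph{is} integrable. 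This is where the logarithmic correction in $\la(t)$ actually does its work: it makes $\la(t)/t^2$ integrable, rather than merely making $1/t$ smaller than $1/\la(t)$. Once you adopt this splitting, the remainder of your outline goes through.
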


This result roughly states that, all global small solutions must decay locally in space. The exact region of decay is determined by the scaling parameter $\la(t)$, chosen in \eqref{la}.

\medskip

In order to show Lemma \ref{integral}, we use the new Virial identity for \eqref{vir} presented for the Born-Infeld equation \eqref{BI}.

%Indeed, let $\varphi := \tanh$, and let $\mathcal{I}$ be defined as   
%\be\label{vir}
%\mathcal{I}(t) ~   :=  ~ {} - \int  \varphi \Big( \frac{x}{\la(t)}\Big) \frac{ \partial_tu \,  \partial_x u}{(1+ (\partial_x u)^2-(\partial_t u)^2)^{1/2}}.
%\ee
%Clearly $\mathcal{I}(t)$ is well defined as long as \eqref{Stab} is satisfied. Here we use the fact that both $\partial_x u$ and $\partial_t u$ are small in $L^\infty$ thanks to the Sobolev embedding. Moreover.
%\[
%\sup_{t\in \R} |\mathcal{I}(t)| \lesssim \ve^2. 
%\]
%Note additionally that the denominator in \eqref{vir} contains the gradient of $u$ contracted with the standard Minkowski metric. 

%\medskip

%The choice of $\mathcal I(t)$ is partly motivated by the energy identity that one obtains in \eqref{phiEt}.

%\begin{lem}[Virial identity]\label{derivada}
%We have
%\be\label{dI}
%\begin{aligned}
%\frac{d}{dt} \mathcal{I} (t)  = &~  \frac{1}{\la(t)} \int \varphi' \Big( \frac{x}{\la(t)}\Big)  \Bigg( 1 -\frac{1 - (\partial_t u)^2}{(1+ (\partial_x u)^2-(\partial_t u)^2)^{1/2}}  \Bigg)\\
%& ~ {}  + \frac{\la'(t)}{\la(t)} \int \frac{x}{\la(t)}  \varphi' \Big( \frac{x}{\la(t)}\Big) \frac{ \partial_tu \,  \partial_x u}{(1+ (\partial_x u)^2-(\partial_t u)^2)^{1/2}}. 
%\end{aligned}
%\ee
%\end{lem}
%Let us assume the validity of this last lemma. Using this identity, we will prove Lemma \ref{integral}.

\begin{proof}%[Proof of Lemma \ref{integral}] 
From \eqref{dI}, we have the identity
\[
\begin{aligned}
\frac{d}{dt} \mathcal{I}(t)  = &~  \frac{1}{ \la(t)} \int \varphi' \Big( \frac{x}{\la(t)}\Big)  \Bigg( 1 -\frac{1 - (\partial_t u)^2}{(1+ (\partial_x u)^2-(\partial_t u)^2)^{1/2}}  \Bigg)\\
& ~ {}  + \frac{\la'(t)}{\la(t)} \int \frac{x}{\la(t)}  \varphi' \Big( \frac{x}{\la(t)}\Big) \frac{ \partial_tu \,  \partial_x u}{(1+ (\partial_x u)^2-(\partial_t u)^2)^{1/2}} \\
= : & ~   \mathcal{I}_1 +\mathcal{I}_2.
\end{aligned}
\]
We will estimate each term above separately. First, we claim
\be\label{Estimate_K}
\abs{ 1 -\frac{1 - (\partial_t u)^2}{(1+ (\partial_x u)^2-(\partial_t u)^2)^{1/2}}  -\frac12 ((\partial_x u)^2+(\partial_t u)^2)  } \leq  (\partial_x u)^4+(\partial_t u)^4.
\ee
This is a simple consequence of the inequality
\[
\abs{ \frac{1 - a }{(1+ b-a)^{1/2}} -1 + \frac12 (a+b)  } \leq  a^2+b^2,
\]
valid for all $a,b\geq 0$ sufficiently small. Consequently,
\[
\begin{aligned}
\mathcal I_1 \geq  &~  \frac{1}{ 2\la(t)} \int \varphi' \Big( \frac{x}{\la(t)}\Big)((\partial_x u)^2+(\partial_t u)^2) \\
& {} - \frac{1}{ \la(t)} \int \varphi' \Big( \frac{x}{\la(t)}\Big)((\partial_x u)^4+(\partial_t u)^4) \\
\geq &~\frac1{4\la(t)} \int \varphi' \Big( \frac{x}{\la(t)}\Big)((\partial_x u)^2+(\partial_t u)^2), \qquad \hbox{(see \eqref{Stab}).}
\end{aligned}
\]
On the other hand, from the estimate
\be\label{MaMa}
(1+ (\partial_x u)^2-(\partial_t u)^2)^{1/2} \geq \frac12,
\ee
and the value of $\la(t)$ in \eqref{la}, we have
\[
\begin{aligned}
\abs{\mathcal I_2} \leq &~   \frac 2t \int \frac{|x|}{\la(t)}  \varphi' \Big( \frac{x}{\la(t)}\Big)|\partial_tu ||\partial_x u| \\
 \leq  & ~ \frac {C\la(t)}{t^2}  \sup_{x\in\R} \Big(\frac{|x|^2}{\la^2(t)}  \varphi' \Big( \frac{x}{\la(t)}\Big) \Big) \int (\partial_tu)^2 + \frac 1{8\la(t)} \int \varphi' \Big( \frac{x}{\la(t)}\Big)(\partial_x u)^2 \\
 \leq  & ~ \frac {C\ve^2}{t \log^2 t}   + \frac 1{8\la(t)} \int \varphi' \Big( \frac{x}{\la(t)}\Big)(\partial_x u)^2. 
\end{aligned}
\]
We collect the estimates on $\mathcal I_1$ and $\mathcal I_2$ to obtain
\[
\frac{d}{dt} \mathcal{I}(t) \geq \frac1{8\la(t)} \int \varphi' \Big( \frac{x}{\la(t)}\Big)((\partial_x u)^2+(\partial_t u)^2) -\frac {C\ve^2}{t \log^2 t}.
\]
After integration in time we get \eqref{Integra}. Finally, \eqref{seq} is obtained from \eqref{Integra} and the fact that $\la^{-1}(t)$ is not integrable in $[2,\infty)$.
\end{proof}

%It remains to prove Lemma \ref{derivada}. Since we require several estimates, we prove this result in the next Section.

\bigskip

\subsection{A final estimate} We finish this Section with a small result.

\begin{lem}
Under the conclusions of Lemma \ref{integral}, we have
\be\label{Conclu_n}
\lim_{n\to +\infty} \int \sech^4\Big(\frac{x}{\lambda(t_n)} \Big)\Big( \frac{1 + (\partial_x u)^2}{(1+ (\partial_x u)^2-(\partial_t u)^2)^{1/2}} -1\Big)(t_n) =0.
\ee
\end{lem}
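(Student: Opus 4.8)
The plan is to deduce this limit directly from the already-established \eqref{seq}, by showing that the new integrand is pointwise controlled by the quadratic quantity $(\partial_t u)^2 + (\partial_x u)^2$ and that the weight $\sech^4$ is dominated by $\sech^2$. First I would record the elementary pointwise bound: writing $a := (\partial_t u)^2 \geq 0$ and $b := (\partial_x u)^2 \geq 0$, I claim that
\[
\left| \frac{1+b}{(1+b-a)^{1/2}} - 1 \right| \leq C\,(a+b)
\]
for all $a,b \geq 0$ sufficiently small. This is proved exactly as the companion inequality \eqref{Estimate_K}: a Taylor expansion of $(1+b-a)^{-1/2}$ gives $\frac{1+b}{(1+b-a)^{1/2}} - 1 = \frac12(a+b) + O((a+b)^2)$, so the left-hand side is $\lesssim a+b$ once $a+b$ is small. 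The requisite smallness of $a$ and $b$ is guaranteed uniformly in $x$ and $t$ by \eqref{Stab} together with the Sobolev embedding $H^s \hookrightarrow L^\infty$, $s > \tfrac12$, which keeps $\partial_x u$ and $\partial_t u$ small in $L^\infty$; in particular the denominator stays bounded below as in \eqref{MaMa}.

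With this bound in hand, the second step is purely about the weights. Since $\sech \leq 1$ we have $\sech^4 \leq \sech^2$ pointwise, so at each time $t_n$,
\[
\left| \int \sech^4\Big(\frac{x}{\la(t_n)}\Big)\Big( \frac{1+(\partial_x u)^2}{(1+(\partial_x u)^2-(\partial_t u)^2)^{1/2}} - 1 \Big) \right| \leq C \int \sech^2\Big(\frac{x}{\la(t_n)}\Big)\big((\partial_t u)^2 + (\partial_x u)^2\big)(t_n,x)\,dx.
\]
Finally, the right-hand side tends to $0$ as $n \to +\infty$ by \eqref{seq}, which is precisely the conclusion of Lemma \ref{integral}. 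Combining the three steps yields \eqref{Conclu_n}.

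As for difficulty, there is essentially no serious obstacle: the statement is a soft corollary of \eqref{seq}. The only point requiring a line of care is the pointwise inequality, and even there the sole subtlety is ensuring that the smallness needed for the Taylor estimate is available uniformly rather than merely almost everywhere; this is handled by the $L^\infty$ control coming from \eqref{Stab} and the Sobolev embedding. I would also note that the leading coefficient $\frac12$ in the expansion matches the one appearing in \eqref{Estimate_K}, consistent with the role of \eqref{Conclu_n} as the energy-type quantity whose time derivative was computed in Lemma \ref{phiE}.
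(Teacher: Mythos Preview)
Your argument is correct and essentially identical to the paper's: both bound the integrand pointwise by $C((\partial_x u)^2+(\partial_t u)^2)$ via a Taylor-type estimate (the paper records the slightly sharper version \eqref{Estimate_E}), replace $\sech^4$ by $\sech^2$, and then invoke \eqref{seq}. The only cosmetic difference is that the paper states the expansion with explicit quartic remainder because that sharper form is reused later for \eqref{Lower}.
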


\begin{proof}
Proceeding as in the proof of estimate \eqref{Estimate_K}, we have
\be\label{Estimate_E}
\abs{ \frac{1 + (\partial_x u)^2}{(1+ (\partial_x u)^2-(\partial_t u)^2)^{1/2}} -1 -\frac12 ((\partial_x u)^2+(\partial_t u)^2)  } \leq  (\partial_x u)^4+(\partial_t u)^4.
\ee
Using \eqref{Stab}, we are lead to the estimate
\[
\begin{aligned}
& \int \sech^4\Big(\frac{x}{\lambda(t_n)} \Big)\Big( \frac{1 + (\partial_x u)^2}{(1+ (\partial_x u)^2-(\partial_t u)^2)^{1/2}} -1\Big)(t_n) \\
& \quad \lesssim \int  \sech^2  \Big( \frac{x}{\la(t_n)}\Big)((\partial_x u)^2+(\partial_t u)^2)(t_n),
\end{aligned}
\]
which shows \eqref{Conclu_n}.
\end{proof}

\begin{rem}
Lemma \ref{integral} can also be obtained using a new and different functional. Consider now the slightly modified Virial term
\be\label{J}
\mathcal{J}(t) ~   :=  ~ {} - \int  \varphi \Big( \frac{x}{\la(t)}\Big) \frac{ \partial_tu \,  \partial_x u}{1+ (\partial_x u)^2}.
\ee
The main difference with respect to \eqref{vir} is that $\mathcal J(t)$ is defined in the larger space $(u,\partial_t u)\in \dot H^1 \times L^2$, 
because the denominator is never zero. It is not difficult to show, using the ideas in \eqref{dI}, that the following cleaner identity holds
\[
\begin{aligned}
\frac{d}{dt} \mathcal{J}(t)  = &~  \frac{1}{2\la(t)} \int \varphi' \Big( \frac{x}{\la(t)}\Big) \Bigg( \frac{(\partial_x u)^2 + (\partial_t u)^2}{1+ 
(\partial_x u)^2} \Bigg)\\
&  + \frac{\la'(t)}{\la(t)} \int \frac{x}{\la(t)}  \varphi' \Big( \frac{x}{\la(t)}\Big) \frac{ \partial_tu \,  \partial_x u}{1+ (\partial_x u)^2}, 
\end{aligned}
\]
which reveals a slightly better control of the local $L^2$ norm of  the pair $(\partial_x u,\partial_t u)$. 
However, unless we ensure that $\partial_x u$ is small in certain uniform sense, this last identity cannot be used to prove Lemma \ref{integral} 
for data in $\dot H^1 \times L^2$ only. This is certainly another key signature of quasilinear models, as the BI equation \eqref{BI}.
\end{rem}

\medskip

\begin{rem}
Finally, note that by choosing $\la(t) =\la_0>0$ fixed in \eqref{J}, we have the better integral estimate
\be\label{Better_decay}
\int_2^\infty \!\! \int \sech^2 \Big(\frac{x}{\la_0}\Big) ( (\partial_x u)^2 + (\partial_t u)^2) (t,x)dx dt \lesssim \la_0 \ve^2.
\ee
We can recast this integral as an averaged decay estimate for the local $L^2$-norm of $(\partial_x u,\partial_t u)$, meaning that formally, 
and locally in space, both decay better than $1/\sqrt{t}$. However, note also that \eqref{Better_decay} describes nonlinear scattering, in the
sense that even nonlinear objects, solutions to \eqref{BI}, depart far away from every compact subset of space.
\end{rem}

\section{Proof of Theorem \ref{TH1}}\label{3}

\medskip

We start with the following energy estimate.

\begin{lem}\label{3p1}
Let $(u,\partial_t u)$ be an $H^2 \times H^1$ solution to \eqref{BI} satisfying \eqref{Stab}. Then we have
\be\label{Energy_estimate}
\begin{aligned}
& \abs{\frac{d}{dt}\int \phi\Big(\frac{x}{\lambda(t)} \Big)\Bigg( \frac{1 + (\partial_x u)^2}{(1+ (\partial_x u)^2-(\partial_t u)^2)^{1/2}} -1\Bigg)}  \\
& \qquad \lesssim  \frac1{\la(t)} \int \sech^2 \Big(\frac{x}{\lambda(t)} \Big) ((\partial_x u)^2 +(\partial_t u)^2).
\end{aligned}
\ee
\end{lem}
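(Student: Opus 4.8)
The plan is to read off the time derivative directly from the exact energy identity \eqref{phiEt} of Lemma \ref{phiE} (recall $\phi=\varphi'^2=\sech^4$), which already expresses it as a sum of two terms; denote by $T_1$ the term carrying $\phi'$ and the factor $\partial_x u\,\partial_t u$, and by $T_2$ the term carrying $\lambda'/\lambda$ and the full energy density. Since $\frac{d}{dt}(\cdots)=T_1+T_2$, it suffices to bound each of $|T_1|$ and $|T_2|$ by the right-hand side of \eqref{Energy_estimate} and add.

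For $T_1$ I would first record that $\phi'=-4\sech^4\tanh$, so that $|\phi'(y)|\le 4\sech^4(y)\le 4\sech^2(y)$ because $\sech\le 1$. Combining this with the lower bound \eqref{MaMa} on the denominator (so its reciprocal is at most $2$) and with $|\partial_x u\,\partial_t u|\le\tfrac12((\partial_x u)^2+(\partial_t u)^2)$ gives immediately
\[
|T_1|\le \frac{C}{\lambda(t)}\int \sech^2\Big(\frac{x}{\lambda(t)}\Big)\big((\partial_x u)^2+(\partial_t u)^2\big),
\]
which is already of the desired form.

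For $T_2$ I would use three facts. First, from \eqref{la} one has $\big|\lambda'(t)/\lambda(t)\big|\le \tfrac1t\big|1-\tfrac{2}{\log t}\big|\lesssim \tfrac1t$, and since $\lambda(t)=Ct/\log^2 t$ with $t\ge 2$ we have $\lambda(t)/t=C/\log^2 t\le C/\log^2 2$, hence $\tfrac1t\lesssim \tfrac1{\lambda(t)}$. Second, the scaling weight satisfies $|y\,\phi'(y)|=4|y|\sech^4(y)|\tanh y|\lesssim \sech^2(y)$, because $|y|\sech^2 y$ is bounded on $\R$. Third, the energy density in the integrand is controlled pointwise by the quadratic term: combining the Taylor estimate \eqref{Estimate_E} with the smallness \eqref{Stab} yields
\[
\Big|\frac{1+(\partial_x u)^2}{(1+(\partial_x u)^2-(\partial_t u)^2)^{1/2}}-1\Big|\lesssim (\partial_x u)^2+(\partial_t u)^2.
\]
Putting these together gives $|T_2|\lesssim \tfrac1{\lambda(t)}\int \sech^2(\tfrac{x}{\lambda(t)})\big((\partial_x u)^2+(\partial_t u)^2\big)$, and adding the two bounds proves \eqref{Energy_estimate}.

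I do not expect a genuine analytic obstacle here: the lemma is an almost immediate corollary of identity \eqref{phiEt}, and the only points needing a little care are pure bookkeeping, namely matching the profile weights $\phi'$ and $y\phi'$ against $\sech^2$ (this is exactly where $\phi=\sech^4$ and the boundedness of $y\sech^2 y$ are used) and converting the scaling rate $1/t$ into $1/\lambda(t)$, which is legitimate precisely because $\log^2 t$ stays bounded below on $[2,\infty)$.
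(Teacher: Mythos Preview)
Your proof is correct and follows essentially the same route as the paper: apply the energy identity \eqref{phiEt}, bound the $\phi'$-term via \eqref{MaMa} and $|\phi'|\lesssim\sech^2$, and bound the $\lambda'/\lambda$-term via \eqref{Estimate_E}, the weight estimate $|y\phi'(y)|\lesssim\sech^2 y$, and $1/t\lesssim 1/\lambda(t)$. You are in fact slightly more explicit than the paper about the pointwise weight inequalities and the conversion $1/t\lesssim 1/\lambda(t)$, but the argument is the same.
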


\begin{rem}
The choice of functional in \eqref{Energy_estimate} is motivated by the ``energy''
\[
\int \Bigg( \frac{1+(\partial_x u)^2}{(1+ (\partial_x u)^2-(\partial_t u)^2)^{1/2}} -1\Bigg),
\]
which is formally conserved by the dynamics. This energy controls the $\dot H^1 \times L^2$ norm if $(\partial_x u)^2-(\partial_t u)^2$ is small, in the sense that 
\[
\int \Bigg( \frac{1+(\partial_x u)^2}{(1+ (\partial_x u)^2-(\partial_t u)^2)^{1/2}} -1\Bigg) \sim \int ((\partial_x u)^2 +(\partial_t u)^2) ;
\]
see more details in \eqref{Lower} below.
\end{rem}

\begin{proof}[Proof of Lemma \ref{3p1}]
Using Lemma \ref{phiE}, we show estimate  \eqref{Energy_estimate} above. First of all, using \eqref{MaMa} we have
\[
\begin{aligned}
& \abs{\frac{1}{\lambda(t)} \int \phi' \Big(\frac{x}{\lambda(t)} \Big) \frac{ \partial_x u \, \partial_t u }{(1+ (\partial_x u)^2-(\partial_t u)^2)^{1/2}} } \\
& \qquad  \lesssim \frac1{\la(t)} \int \sech^2 \Big(\frac{x}{\lambda(t)} \Big) ((\partial_x u)^2 +(\partial_t u)^2).
\end{aligned}
\]
Finally, using \eqref{Estimate_E}, \eqref{la}, and the fact that $\phi=\sech^4$ satisfies the estimate 
$\abs{\frac{x}{\lambda(t)}\phi' (\frac{x}{\lambda(t)}) } \lesssim \sech^2 (\frac{x}{\lambda(t)})$,
\[
\begin{aligned}
& \abs{ \frac{\lambda'(t)}{\lambda(t)} \int \frac{x}{\lambda(t)}\phi' \Big(\frac{x}{\lambda(t)}\Big)\Bigg( \frac{1 + (\partial_x u)^2}{(1+ (\partial_x u)^2-(\partial_t u)^2)^{1/2}} -1\Bigg)} \\
& \qquad \lesssim \frac1t \int  \sech^2\Big(\frac{x}{\lambda(t)}\Big) ((\partial_x u)^2 +(\partial_t u)^2) \\
& \qquad \lesssim \frac1{\la(t)} \int \sech^2 \Big(\frac{x}{\lambda(t)} \Big) ((\partial_x u)^2 +(\partial_t u)^2). 
\end{aligned}
\]
Collecting these two estimates we get \eqref{Energy_estimate}.

\end{proof}

\medskip

We finish the proof of Theorem \ref{TH1}. 

\medskip

\begin{proof}[Proof of Theorem \ref{TH1}] We have from \eqref{Energy_estimate}, and for $t<t_n$,
\[
\begin{aligned}
&  \Bigg| \int \phi\Big(\frac{x}{\lambda(t_n)} \Big)\Bigg( \frac{1 + (\partial_x u)^2}{(1+ (\partial_x u)^2-(\partial_t u)^2)^{1/2}} -1\Bigg)(t_n)  \\
& \qquad - \int \phi\Big(\frac{x}{\lambda(t)} \Big)\Bigg( \frac{1 + (\partial_x u)^2}{(1+ (\partial_x u)^2-(\partial_t u)^2)^{1/2}} -1\Bigg)(t)  \Bigg|  \\
& \qquad \qquad \lesssim \int_{t}^{t_n} \frac1{\la(s)} \int \sech^2 \Big(\frac{x}{\lambda(s)} \Big) ((\partial_x u)^2 +(\partial_t u)^2)dxds.
\end{aligned}
\]
Sending $n$ to infinity, and using \eqref{Conclu_n}, we get
\[
\begin{aligned}
& \abs{\int \phi\Big(\frac{x}{\lambda(t)} \Big)\Big( \frac{1 + (\partial_x u)^2}{(1+ (\partial_x u)^2-(\partial_t u)^2)^{1/2}} -1\Big)(t)  } \\
& \qquad \lesssim \int_{t}^{\infty} \frac1{\la(s)} \int \sech^2 \Big(\frac{x}{\lambda(s)} \Big) ((\partial_x u)^2 +(\partial_t u)^2)dxds,
\end{aligned}
\]
which implies, thanks to Lemma \ref{integral},
\[
\lim_{t\to +\infty}\abs{\int \phi\Big(\frac{x}{\lambda(t)} \Big)\Bigg( \frac{1 + (\partial_x u)^2}{(1+ (\partial_x u)^2-(\partial_t u)^2)^{1/2}} -1\Bigg)(t)  } =0.
\]
Finally, from the inequality \eqref{Estimate_E} we get the lower bound
\be\label{Lower}
\begin{aligned}
& \int \phi\Big(\frac{x}{\lambda(t)} \Big) ((\partial_x u)^2 +(\partial_t u)^2)(t)\\
& \qquad  \lesssim \int \phi\Big(\frac{x}{\lambda(t)} \Big)\Bigg( \frac{1 + (\partial_x u)^2}{(1+ (\partial_x u)^2-(\partial_t u)^2)^{1/2}} -1\Bigg)(t),
\end{aligned}
\ee
which finally shows the validity of Theorem \ref{TH1}.
\end{proof}

\begin{rem}[Final remarks]
It is expected that some of these results hold for larger dimensions, with probably difficult proofs. 
However, in view of the already well-known results \cite{Lindblad} for dimensions $n\geq 2$, we believe 
that the main new contribution in this note is given in the treatment of the scattering problem in the one dimensional case.
\end{rem}

\bigskip
\bigskip

\end{document}